\theoremstyle{plain}
\newtheorem{proposition}{Proposition}
\newtheorem{lemma}{Lemma}
\newtheorem{conjecture}{Conjecture}
\newtheorem{corollary}{Corollary}
\begin{document}
\date{}

\title[On the length of global integrals for $GL_n$]
{\bf On the length of global integrals for $GL_n$ }
\author{ David Ginzburg}

\begin{abstract}
In this paper we prove  Conjecture \ref{conj1}  for a set of representations of the group $GL_n({\bf A})$. This Conjecture is stated in complete generality as Conjecture 1 in \cite{G2}, and here we prove it for various cases. See
Conjecture \ref{conj2} below. First we prove it in the
case when the length of the integral is four, and then we discuss the 
general case. 

\end{abstract}

\thanks{ The author is partly supported by the Israel Science
Foundation grant number  259/14}

\address{ School of Mathematical Sciences\\
Sackler Faculty of Exact Sciences\\ Tel-Aviv University, Israel
69978 } \maketitle \baselineskip=18pt

\section{introduction}
Let $F$ denote a global field and let ${\bf A}$ denote its adele
ring. As is well known, in the Rankin-Seleberg method one writes down a global integral which depends on a complex parameter $s$, and the
basic problem is to determine when this integral is
Eulerian. One of the useful tools to study this problem is the so call dimension equation. For a definition of the dimension equation and related results and conjectures, see \cite{G1} Definition 3, \cite{G2}, \cite {G3} and \cite{G4}. Conjecture 1 as stated in \cite{G2} is one of the basic conjectures in this topic. We will now state it in the context of this paper.

For $1\le i\le l+2$, let $\pi_i$ denote $l+2$ automorphic representations of the group $GL_n({\bf A})$. Assume that $\pi_{l+1}$ is a cuspidal representation, and that $\pi_{l+2}$ is an Eisenstein series  defined on the group $GL_n({\bf A})$.

Consider the following integral,
\begin{equation}\label{global1}
\int\limits_{Z({\bf A})GL_n(F)\backslash GL_n({\bf A})}
\varphi_1(g)\varphi_2(g)\ldots\varphi_{l+1}(g)E(g,s)dg
\end{equation}
Here, $Z$ is the center of $GL_n$, and we assume that the product of all central characters of the above representations is one. Also, $\varphi_i$ is a vector in the space of $\pi_i$, and $E(g,s)$ is a certain Eisenstein series. We assume that none of the representations involved is a one dimensional representation, and we refer to the number $l+2$ as to the length of the integral. 

To define the dimension equation attached to the integral \eqref{global1}, we first define the notion of the Gelfand-Kirillov dimension of a representation. As explained in \cite{G3}, to every irreducible automorphic representation $\pi$ of $GL_n({\bf A})$ one can attach a set of unipotent orbits which we denote by ${\mathcal O}(\pi)$. As in \cite{G3} we assume that this set consists of one element. Thus,
we define the dimension of $\pi$, denoted by $\text{dim}\ \pi$, to be
a half of the number $\text{dim}\ {\mathcal O}(\pi)$. For the 
definition of the dimension of a unipotent orbit we refer to \cite{C-M}. For example, the representation $\pi_{l+1}$ is a cuspidal representation, and hence it is a generic representation. Hence ${\mathcal O}(\pi_{l+1})=(n)$ and 
$\text{dim}\ {\mathcal O}(\pi)=\frac{1}{2}n(n-1)$.
With these notations, the dimension equation is defined by, 
\begin{equation}\label{dim1}
\sum_{i=1}^{l+2}\text{dim}\ \pi_i=\text{dim}\ GL_n -1
\end{equation}
As explained in \cite{G1}, \cite{G2} and \cite{G4}, all known global unipotent integrals which are non-zero and Eulerian, do satisfy the
dimension equation \eqref{dim1}. For the definition of unipotent 
integrals see \cite{G2}. 

The main Conjecture in this topic is
\begin{conjecture}\label{conj1}
Assume that integral \eqref{global1} satisfies the dimension equation
\eqref{dim1}. Suppose that $l>1$. Then the integral is zero for all choice of data.
\end{conjecture}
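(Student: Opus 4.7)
The plan is to start with the length-four case $l=2$, where the integrand is $\varphi_1\varphi_2\varphi_3 E(g,s)$ with $\varphi_3$ cuspidal on $GL_n$, and then to extend the same mechanism to general $l>1$. The guiding principle is to exploit the cuspidality (hence genericity) of $\pi_{l+1}$ to replace the integration over $GL_n(F)\backslash GL_n(\mathbf{A})$ by an integration over $U(F)\backslash GL_n(\mathbf{A})$, where $U$ is the maximal upper unipotent subgroup of $GL_n$, picking up the Whittaker coefficient $W_{\varphi_3}(g)$ in place of $\varphi_3(g)$. The resulting expression can then be rewritten, through a sequence of Fourier expansions and root exchanges on carefully chosen subgroups of $U$ and their conjugates, as an integral in which the non-cuspidal data $\varphi_1,\varphi_2,E(g,s)$ is paired against a Fourier coefficient supported on a unipotent orbit $\mathcal{O}$ of $GL_n$ that is ``too large'' to be supported by any of the $\pi_i$ with $i\neq l+1$.

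The dimension equation \eqref{dim1} is precisely what allows one to control how large $\mathcal{O}$ must be: writing
\[
\sum_{i\neq l+1}\dim\pi_i = (n^2-1)-\dim\pi_{l+1},
\]
and counting the unipotent directions freed up by the Whittaker unfolding, I would show that the leftover Fourier coefficient against $\varphi_1\varphi_2 E(g,s)$ necessarily sits along an orbit whose dimension exceeds the combined orbit dimensions of $\pi_1,\pi_2,\pi_{l+2}$. At that point the vanishing principle relating $\mathcal{O}(\pi)$ to Fourier coefficients (see \cite{G3}) forces the inner integral, and hence the whole expression, to vanish identically for all choice of data. The hypothesis $l>1$ is essential at this step: when $l=1$ there is no ``extra'' non-cuspidal factor to absorb the freed unipotent directions, and the orbit produced can be made to match the single $\mathcal{O}(\pi_1)$ available, so no contradiction arises, which is consistent with the fact that genuine Rankin--Selberg integrals of length three are known to be Eulerian.

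The main obstacle will be the combinatorial bookkeeping of the root exchanges and Fourier expansions needed to reorganize the unfolded integral into this useful form. Each exchange must preserve convergence --- delicate because $E(g,s)$ is defined only by meromorphic continuation --- and each auxiliary Fourier expansion that is introduced and then collapsed must be justified either by the cuspidality of $\varphi_3$ or by the unipotent-orbit hypotheses on $\pi_1$ and $\pi_2$. For $l=2$ a direct case analysis according to the partitions attached to $\mathcal{O}(\pi_1),\mathcal{O}(\pi_2),\mathcal{O}(\pi_{l+2})$ should be feasible, and this is what I would write out in detail first. For general $l>1$ the case analysis multiplies rapidly, and an inductive scheme --- either on $l$ by collapsing two factors into a single automorphic datum governed by a smaller partition, or on $n$ via restriction to a proper parabolic and invoking the conjecture in lower rank --- will probably be needed to keep the argument finite.
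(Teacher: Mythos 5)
This statement is a \emph{conjecture}, and the paper does not prove it in full generality; it only establishes special cases (essentially Conjecture \ref{conj2} for Speh-type representations and for Eisenstein series), explicitly leaving the other cases open. Your proposal is a strategy sketch rather than a proof, and while its broad outline mirrors what the paper actually does in those special cases, it has concrete gaps. First, your opening reduction --- collapsing $GL_n(F)\backslash GL_n({\bf A})$ to $U_n(F)\backslash GL_n({\bf A})$ by expanding the cusp form $\varphi_{l+1}$ --- only works cleanly because the Piatetski-Shapiro expansion runs over $U_n(F)\backslash P(F)$ for the mirabolic, and one needs the Eisenstein series to supply the remaining sum over $P(F)\backslash GL_n(F)$. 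This is exactly why the paper restricts $E(g,s)$ to the degenerate Eisenstein series attached to $Ind_{P({\bf A})}^{GL_n({\bf A})}\delta_P^s$ with Levi $GL_{n-1}\times GL_1$; for a general $E(g,s)$ as allowed in Conjecture \ref{conj1}, the double coset structure is more complicated and your step does not go through as stated.

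Second, the heart of the matter --- showing that the ``leftover'' Fourier coefficient sits on an orbit too large for the remaining $\pi_i$ --- is asserted but not supplied. In the paper this requires a genuine combinatorial inequality (Lemma \ref{lem2}: $\text{dim}\,\lambda+\text{dim}\,\mu>n^2-n$ for partitions $\lambda$ of length at most $n-m_1+1$), together with a careful induction through the subgroups $V_{i_1,\ldots,i_k}$ and the degenerate characters $\psi_{U,\epsilon}$; none of this bookkeeping is present in your plan. Third, your claim that ``a direct case analysis according to the partitions attached to $\mathcal{O}(\pi_1),\mathcal{O}(\pi_2),\mathcal{O}(\pi_{l+2})$ should be feasible'' is precisely the obstruction the paper identifies and cannot overcome: the $\pi_i$ for $i\le l$ are arbitrary automorphic representations, the solutions of the dimension equation cannot be practically classified, and the paper's results cover only representations whose attached orbit is of the form $(q^m)$ or arises from an Eisenstein series with trivial top factor. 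A correct writeup would either have to restrict the hypotheses as the paper does, or supply a new idea handling arbitrary $\mathcal{O}(\pi_i)$ --- which would be a genuine advance beyond what is known.
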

In particular this Conjecture asserts that if a global unipotent integral satisfies the dimension equation, and is not zero then $l=1$. It is well known that such integrals exists. For example the Rankin product integral is such an integral. See \cite{G4} Theorem 1 for a partial classification of such integrals.

There are two main  difficulties  in studying Conjecture \ref{conj1}. The first difficulty is that it is not practical to
classify all solutions to equation \eqref{dim1}. For low values of $n$
it is not hard but the number of solutions grows quite fast. The second difficulty is the fact that for $1\le i\le l$, the representations $\pi_i$ are arbitrary and hence when unfolding the
integral and performing Fourier expansions, there are many cases to consider. 

To illustrate this , let us consider the case which motivates the integrals we study in this paper. Consider the special case of integral \eqref{global1} where the Eisenstein series is a minimal representation of $GL_n({\bf A})$. In other words, let $E(g,s)$ denote the Eisenstein series attached to the induced representation $Ind_{P({\bf A})}^{GL_n({\bf A})}\delta_P^s$. Here $P$ is the maximal parabolic of $GL_n$ whose Levi part is $GL_{n-1}\times GL_1$.  A simple unfolding process which we will perform in the next section implies that the integral
\begin{equation}\label{global2}
\int\limits_{U_n(F)\backslash U_n({\bf A})}
\varphi_1(u)\varphi_2(u)\ldots\varphi_{l}(u)\psi_U(u)du
\end{equation}
is an inner integration to integral \eqref{global1}. Here $U_n$ is the maximal unipotent subgroup of $GL_n$ and $\psi_U$ is the Whittaker character of $U_n(F)\backslash U_n({\bf A})$. For more details see Section \ref{pre}. 

More over, as we will explain below, if integral \eqref{global1}
satisfies the dimension equation \eqref{dim1}, then integral \eqref{global2} also satisfies a similar equation. Namely, we have
\begin{equation}\label{dim2}
\sum_{i=1}^{l}\text{dim}\ \pi_i=\text{dim}\ U_n=\frac{1}{2}n(n-1)
\end{equation}
Hence, in this case, Conjecture \ref{conj1} reduces to
\begin{conjecture}\label{conj2}
Suppose that integral \eqref{global2} satisfies the dimension equation
\eqref{dim2}. Then integral \eqref{global2} is zero for all choice of data.
\end{conjecture}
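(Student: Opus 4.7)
The plan is to reduce Conjecture \ref{conj2} to a case analysis organised by the unipotent orbits attached to the representations $\pi_1,\ldots,\pi_l$. Since each $\mathcal{O}(\pi_i)$ is assumed to consist of a single partition of $n$, and none of the $\pi_i$ is one-dimensional, the dimension equation \eqref{dim2} restricts the admissible tuples of partitions quite sharply. A preliminary step is therefore to enumerate these tuples for each $n$.

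For the length four case ($l=2$), I would first list all pairs of non-trivial partitions $(\lambda,\mu)$ of $n$ satisfying $\text{dim}\ \mathcal{O}_\lambda + \text{dim}\ \mathcal{O}_\mu = n(n-1)$ and treat each such pair in turn. Fixing such a pair, the main idea is to apply a Fourier expansion to $\varphi_1$ along an appropriate abelian unipotent subgroup $V \subset U_n$, whose character group captures the Whittaker-type Fourier coefficients attached to the orbit $\mathcal{O}_\lambda$. Substituting this expansion into \eqref{global2} and performing a change of variable in $U_n({\bf A})$ that absorbs the new character into $\psi_U$, the integral becomes a sum of terms of the shape
$$
\int_{U_n(F)\backslash U_n({\bf A})} \varphi_1^{\psi_V}(u)\,\varphi_2(u)\,\psi'_U(u)\,du.
$$
The dimension equation is tight enough to force the new character $\psi'_U$ to stabilise a unipotent orbit strictly larger than $\mathcal{O}_\mu$ in the dominance order; since $\mathcal{O}_\mu$ is by assumption the maximal orbit supporting non-zero Fourier coefficients of $\pi_2$, the inner integration against $\varphi_2$ vanishes for every choice of data.

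For the general case the same strategy applies in principle: expand one of the $\varphi_i$ along a unipotent subgroup matched to its attached orbit, exchange root subgroups so as to fold the resulting character into $\psi_U$, and read off vanishing from the orbit bound of some other factor $\pi_j$. The main obstacle is combinatorial: the number of admissible $l$-tuples of partitions summing to the prescribed total orbit dimension grows rapidly with $l$ and $n$, and a single Fourier expansion is typically insufficient to produce a character larger than $\mathcal{O}(\pi_j)$. One therefore needs an iterated or inductive expansion scheme, together with a uniform dimension-counting argument that, at each step, either produces an oversize character on some $\pi_j$ or reduces the integral to a shorter one on a smaller group to which the inductive hypothesis can be applied.
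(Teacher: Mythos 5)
The statement you are asked to address is a conjecture, and the paper does not prove it in full generality; it establishes it only for representations that are either Eisenstein series or of Speh type. Your proposal aims at the general statement but is a plan rather than a proof, and the plan has a genuine gap at its central step. You assert that ``the dimension equation is tight enough to force the new character $\psi'_U$ to stabilise a unipotent orbit strictly larger than $\mathcal{O}_\mu$,'' but this is precisely where all the work lies, and it is not true without further structural input. The paper's mechanism is different: it never Fourier-expands a general $\varphi_1$ along an orbit-adapted subgroup. Instead it insists that (at least) one factor be an Eisenstein series $E_\tau(g,\bar s)$, \emph{unfolds} that series over $Q(F)\backslash GL_n(F)/U_n(F)$, and observes that the resulting inner integrals over $U_n^w = w^{-1}U_nw\cap U_n$ can only produce Fourier coefficients of $\pi$ attached to partitions of length at most $n-m_1+1$, where $m_1$ is the size of the largest block of the parabolic. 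The vanishing then comes from a uniform inequality (Lemma \ref{lem2}): for \emph{every} partition $\lambda$ of that bounded length, $\dim\lambda+\dim\mu>n^2-n$, so the dimension equation forces $\dim\mathcal{O}_i>\dim\mathcal{O}(\pi)$ and the coefficient vanishes by maximality of $\mathcal{O}(\pi)$. Without the unfolding step there is no a priori bound on the length of the orbits your expansion produces, and without Lemma \ref{lem2} there is no way to avoid the case-by-case enumeration that you yourself flag as intractable (and which the paper explicitly declares impractical).

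Two further points. First, your proposed expansion of $\varphi_1$ along an abelian subgroup $V$ whose characters ``capture'' the coefficients attached to $\mathcal{O}_\lambda$ is not available in general: Fourier coefficients attached to a general unipotent orbit are not taken along abelian subgroups of $U_n$, and the claimed change of variables absorbing the new character into $\psi_U$ is not a change of variables but requires the root-exchange manipulation (the paper's Lemma \ref{lem3}) applied iteratively, with careful bookkeeping of which constant terms and which nontrivial terms arise. Second, the paper's treatment of two Speh-type factors is not by expansion at all but by showing (Lemma \ref{lem1}, via Lemma \ref{lem2} applied to $\mu=(2^{n/2})$ or $(2^{(n-1)/2}1)$) that the dimension equation simply cannot hold, so that case is vacuous; your scheme would have you attempt an expansion there for no purpose. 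To repair your proposal you would need to (i) restrict to the classes of representations for which an attached orbit and an unfolding are available, (ii) prove the length bound on the orbits arising from the unfolding, and (iii) prove the uniform dimension inequality replacing the enumeration.
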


This Conjecture is interesting by itself. Indeed, suppose that $l=1$. Since we want to consider integrals which depends on a complex number $s$, then we take $\pi_1$ to be an Eisenstein series. Thus, integral
\eqref{global2} represents in this case the Whittaker coefficient of
an Eisenstein series. The study of these type of integrals, known as
the Langlands Shahidi integrals was studied in \cite{S}. Thus, Conjecture \ref{conj2} asserts that if $l\ge 2$, there are no nonzero integrals given by integral \eqref{global2} and satisfies \eqref{dim2}.

In studying the above Conjectures we will concentrate on the two
most important type of representations. The first type is Eisenstein series. A precise definition is given at the beginning of Section \ref{l2}. The second type of representations are what we refer to as representations of Speh type. In our context, a representation $\pi$ is a representation  of Speh type if ${\mathcal O}(\pi)=(q^m)$. Here $m$ and $q$ are two natural numbers such that $n=mq$. The
motivation is that every Speh representation, for the definition see 
\cite{J}, is such a representation. See \cite{G3} Proposition 5.3.
Notice that by our definition, every generic representation is a representation of Speh type, and it is not hard to find examples of
Eisenstein series which are also such representations. 

We are aware that there are other representations which are not of 
the types mentioned above. For example Eisenstein series at some special
values or residues of  Eisenstein series. However, the above two types
are the most important. Every representation in the discrete 
spectrum is included in them. We hope to consider the other cases in
the future.

\section{notations and preliminary results}\label{pre}

We keep the notations of the Introduction. We start by unfolding the
global integral \eqref{global1} in the case where $E(g,s)$ is the Eisenstein series defined right before integral \eqref{global2}. Assuming $\text{Re}(s)$ large this integral is equal to
\begin{equation}\label{global3}
\int\limits_{Z({\bf A})P(F)\backslash GL_n({\bf A})}
\varphi_1(g)\varphi_2(g)\ldots\varphi_{l+1}(g)f(g,s)dg
\end{equation}
Since we assume that $\pi_{l+1}$ is a cuspidal representation, we can use the well known expansion for such representations, see \cite{PS},
$$\varphi_{l+1}(g)=\sum_{\gamma\in U_n(F)\backslash P(F)}
W_{l+1}(\gamma g)$$
Here $W_{l+1}$ is the Whittaker coefficient of $\varphi_{l+1}$, defined by
$$W_{l+1}(g)=\int\limits_{U_n(F)\backslash U_n({\bf A})}
\varphi_{l+1}(ug)\psi_U(u)du$$
The character $\psi_U$ is defined as follows. Let $u=(u_{i,j})\in U_n$. Then
$\psi_U(u)=\psi(u_{1,2}+u_{2,3}+\cdots +u_{n-1,n})$. 
Plugging the above expansion in integral \eqref{global3}, we obtain
\begin{equation}\label{global4}
\int\limits_{Z({\bf A})U_n(F)\backslash GL_n({\bf A})}
\varphi_1(g)\varphi_2(g)\ldots\varphi_{l}(g)W_{l+1}(g)f(g,s)dg\notag
\end{equation}
Factoring the measure, we obtain integral \eqref{global2} as inner integration. 

Suppose that integral \eqref{global1} satisfies the dimension equation \eqref{dim1}. Since $\pi_{l+1}$ is a generic representation, then 
$\text{dim}\ \pi_{l+1}=\frac{1}{2}n(n-1)$. The Eisenstein series $E(g,s)$ used in the above integral is attached to the unipotent orbit
$(21^{n-2})$ and has dimension $n-1$. Plugging these numbers into equation \eqref{dim1} we obtain the equation \eqref{dim2}.

Let $\pi$ denote an automorphic representation, and suppose that
${\mathcal O}(\pi)=\lambda=(k_1k_2\ldots k_p)$ which is a partition of $n$. 
In other words, we have $k_i\ge k_{i+1}$ and $\sum k_i=n$. Then, as follows from \cite{C-M}, see also \cite{G4}, we have
\begin{equation}\label{dimpart}
\text{dim}\ \pi=\frac{1}{2}\text{dim}\ \lambda= \frac{1}{2}(n^2-\sum_{i=1}^p(2i-1)k_i)=\frac{1}{2}(n^2+n)-\sum_{i=1}^p ik_i
\end{equation}

In the following Lemma we compute a certain relation between the
dimensions of  certain type of partitions. We
recall that if $\lambda=(k_1k_2\ldots k_p)$ where $k_p\ge 1$, then $p$ is called the length of the partition. Also, we denote by $\lambda^t$ the transpose of the partition $\lambda$. See \cite{C-M}.

\begin{lemma}\label{lem2}
Let $\mu$ be a nontrivial  partition of $n$, and assume that $\mu^t=(m_1m_2\ldots m_r)$. Then, for any partition $\lambda$ of $n$, whose length is at most $n-m_1+1$, we have
\begin{equation}\label{len1}
\text{dim}\ \lambda+\text{dim}\ \mu> n^2-n
\end{equation}
\end{lemma}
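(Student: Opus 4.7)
The plan is to translate \eqref{len1} into a bound on sums of squared transposed parts and then invoke the length hypothesis by a one-line majorization. From formula \eqref{dimpart} together with the standard identity
\[
\sum_{i} (2i-1)\, k_i \;=\; \sum_{j} (\lambda^t_j)^2
\]
(both sides equal $\sum_{i_1, i_2} \min(k_{i_1}, k_{i_2})$, by double-counting the cells of the Young diagram), one has $\dim \lambda = n^2 - \sum_j (\lambda^t_j)^2$ and similarly for $\mu$. Hence \eqref{len1} is equivalent to
\[
\sum_{j} (\lambda^t_j)^2 \;+\; \sum_{j} (\mu^t_j)^2 \;<\; n^2 + n.
\]

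The core estimate is now immediate. Setting $a_1 := \lambda^t_1$, which equals the length of $\lambda$, every entry of $\lambda^t$ is bounded by $a_1$ and the entries sum to $n$, giving $\sum_j (\lambda^t_j)^2 \leq a_1 n$; likewise $\sum_j (\mu^t_j)^2 \leq m_1 n$. The hypothesis on the length of $\lambda$ reads exactly $a_1 + m_1 \leq n + 1$, so
\[
\sum_{j} (\lambda^t_j)^2 + \sum_{j} (\mu^t_j)^2 \;\leq\; (a_1 + m_1)\, n \;\leq\; (n+1)\, n \;=\; n^2 + n.
\]

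The only delicate point is upgrading this to a strict inequality. Equality in $\sum_j (\lambda^t_j)^2 \leq a_1 n$ forces every nonzero entry of $\lambda^t$ to equal $a_1$, so $\lambda$ is rectangular with $a_1 \mid n$; the same holds for $\mu$. If in addition $a_1 + m_1 = n + 1$, then since any proper divisor of $n$ is at most $n/2$, at least one of $a_1, m_1$ must equal $n$ (otherwise their sum is $\leq n$), which forces $\{\lambda, \mu\} = \{(n), (1^n)\}$. Both candidates for $\mu$ are excluded by the nontriviality hypothesis, so at least one of the inequalities above is strict, as required. This equality-case analysis is the main (and really, only) obstacle; everything else reduces to the one-line bound $(\lambda^t_j)^2 \leq a_1\, \lambda^t_j$ summed over $j$.
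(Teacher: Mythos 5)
Your proof is correct and takes a genuinely different, and considerably shorter, route than the paper's. The paper works directly with the quantity $I=n+\sum_{i<j}m_im_j-\sum_i ik_i$ coming from \eqref{dimpart}, uses that $I$ increases along the dominance order of partitions, reduces to the minimal partitions of length at most $n-m_1+1$ (namely $(m_11^{n-m_1})$ and the nearly rectangular partitions $(a^{p_1}(a-1)^{p_2})$ determined by \eqref{len3} and \eqref{len4}), and then verifies $I>0$ for each of these by a fairly long computation split into the cases $a\ge 3$ with $p_2\ge 1$, $p_2=0$, and $a=2$. You instead rewrite $\dim\lambda$ as $n^2-\sum_j(\lambda^t_j)^2$ via the identity $\sum_i(2i-1)k_i=\sum_j(\lambda^t_j)^2$, after which the whole inequality collapses to $\sum_j(\lambda^t_j)^2+\sum_j(\mu^t_j)^2<n^2+n$, handled by the one-line bound $\sum_j x_j^2\le(\max_j x_j)(\sum_j x_j)$ together with the hypothesis $\lambda^t_1+m_1\le n+1$. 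This bypasses the dominance-order reduction and the case analysis entirely, and it has the added benefit of pinning down the exact equality locus.

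One caveat, which is really a defect of the lemma's statement rather than of your argument: your equality analysis shows the non-strict bound becomes an equality precisely when $\{\lambda,\mu\}=\{(n),(1^n)\}$, and you dismiss both configurations by ``nontriviality'' of $\mu$. The case $\mu=(1^n)$ is indeed the trivial partition, but $\mu=(n)$ (so $\mu^t=(1^n)$ and $m_1=1$) is not usually called trivial, and for it the lemma as literally stated fails: $\lambda=(1^n)$ has length $n=n-m_1+1$ and $\dim(n)+\dim(1^n)=n^2-n$ exactly. The paper's own proof has the same blind spot --- its assertion that $(m_11^{n-m_1})$ satisfies \eqref{len2} is false when $m_1=1$ --- and the case never arises in the applications (there $\mu=(n)$ would correspond to the Borel Eisenstein series and would force $\dim\pi=0$, which is excluded). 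So your proof is as complete as the paper's and more transparent about where the boundary lies; just state explicitly that you read ``nontrivial'' as excluding $\mu=(n)$ as well as $\mu=(1^n)$.
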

\begin{proof}
Using \cite{C-M}, see also \cite{G3} Proposition 5.16, we have $\text{dim}\ \mu=2\sum_{1\le i<j\le r}m_im_j$. 
Using  equation \eqref{dimpart} we need to prove that 
\begin{equation}\label{len2}
I=n+\sum_{1\le i<j\le r}m_im_j-\sum_{i=1}^p ik_i>0
\end{equation}
for all partitions $\lambda=(k_1\ldots k_p)$ where $p\le n-m_1+1$. The partition $(m_11^{n-m_1})$ is a partition of length $n-m_1+1$. It is not hard to check that it satisfies inequality \eqref{len2}, and every unipotent orbit  ${\mathcal O}=(k_1'k_2'\ldots k_q')$ such that $k_1'\ge m_1$ and $q\le n-m_1+1$, is greater than or equal to  $(m_11^{n-m_1})$. If $k_1'\le m_1-1$, 
then there is a number $2\le a \le m_1$ such that ${\mathcal O}$ is greater than or equal to $(a^{p_1}(a-1)^{p_2})$
with the following conditions. First, we have 
\begin{equation}\label{len3}
p_1+p_2=n-m_1+1\ \ \ \ \ \ ap_1+(a-1)p_2=n
\end{equation}
If $a>2$, then we also have
\begin{equation}\label{len4}
\frac{am_1-(a+1)}{a-1} < n\le \frac{(a-1)m_1-a}{a-2}
\end{equation}
When $a=2$, we have the condition $n\ge 2m_1-2$.

Thus, it is enough to prove that $I>0$ for the partitions $(a^{p_1}(a-1)^{p_2})$ with the above conditions. 
To do that we compute $I$ for these partitions. It is equal to
\begin{equation}\label{len41}
n+\sum_{1\le i<j\le r}m_im_j-a\sum_{i=1}^{p_1} i -(a-1)\sum_{i=p_1+1}^{p_1+p_2}i=n+\sum_{1\le i<j\le r}m_im_j-a\sum_{i=1}^{p_1+p_2} i+\sum_{i=p_1+1}^{p_1+p_2}i\notag
\end{equation}
From this we obtain
\begin{equation}\label{len5}
I=n+\sum_{1\le i<j\le r}m_im_j+\sum_{i=p_1+1}^{p_1+p_2}i-
\frac{a}{2}(n-m_1+1)(n-m_1+2)
\end{equation}
Assume first that $a\ge 3$ and that $p_2\ge 1$. From the right hand side of the inequality \eqref{len4}, we deduce that $n-m_1+1\le \frac{m_1-2}{a-2}$. Hence, it is enough to prove that
\begin{equation}\label{len6}
I_0=n+\sum_{1\le i<j\le r}m_im_j+\sum_{i=p_1+1}^{p_1+p_2}i-
\frac{a(m_1-2)}{2(a-2)}(n-m_1+2)>0\notag
\end{equation}
Write
\begin{equation}\label{len7}
\sum_{1\le i<j\le r}m_im_j=m_1(m_2+\cdots+m_r)+\sum_{2\le i<j\le r}m_im_j=m_1(n-m_1)+\sum_{2\le i<j\le r}m_im_j\notag
\end{equation}
Plugging this into $I_0$, we deduce that $I_0$ is equal to
\begin{equation}\label{len8}
\sum_{2\le i<j\le r}m_im_j+\sum_{i=p_1+1}^{p_1+p_2}i+
\left (1-\frac{a}{2(a-2)}\right )m_1(n-m_1)+n+\frac{a(n-m_1+2)}{a-2}
-\frac{am_1}{a-2}\notag
\end{equation}
Since $a\ge 3$, then the third term from the left is positive. From the assumption that $p_2\ge 1$, and from the fact that $p_1+p_2=n-m_1+1$ we deduce that 
the second term from the left is equal to $n-m_1+1+\epsilon$ where $\epsilon\ge 0$. Hence, to conclude that $I_0>0$, it is enough to check that 
\begin{equation}\label{len9}
n+\frac{a(n-m_1+2)}{a-2}+n-m_1+1\ge \frac{am_1}{a-2}\notag
\end{equation}
This is equivalent to $n\ge (3a-2)(m_1-1)/(3a-4)$. Using the left inequality of \eqref{len4}, it is enough to prove that 
$$\frac{am_1-(a+1)}{a-1}\ge \frac{(3a-2)(m_1-1)}{(3a-4)}$$
This inequality is easy to verify. 

To conclude the case when $a\ge 3$, we still have to consider the case when $p_2=0$. When this happens, then it follows from \eqref{len3} that $a(n-m_1+1)=n$. Plugging this into \eqref{len5} we obtain 
$I=\sum m_im_j+(nm_1-n^2)/2$. Since $n=\sum m_i$, then $2\sum m_im_j-n^2=-\sum m_i^2$, and it is easy to check that $I>0$.

Finally we need to prove that when $a=2$, then $I>0$. In this case we have $n\ge 2m_1-2$, and $p_1=m_1-1$. Hence, 
\begin{equation}\label{len10}
I=\sum_{2\le i<j\le r} m_im_j +2m_1n+2m_1-\frac{1}{2}(n^2+n)
-2m_1^2-1\notag
\end{equation}
Notice that in the right hand side the first sum is over $2\le i<j\le r$. This follows from the identity $m_1(m_2+\cdots m_r)=m_1(n-m_1)$.

Let $m_i=m_1-\mu_i$ where $\mu_2\le\mu_3\le\ldots\le \mu_r$. Then $n=rm_1-\mu$ where $\mu=\mu_2+\cdots +\mu_r$. Plugging all this into the right hand side of the above equation, we obtain
$$\sum_{2\le i<j\le r}(m_1-\mu_i)(m_1-\mu_j)+2m_1(rm_1+\mu)+2m_1-
\frac{1}{2}((rm_1+\mu)^2+rm_1+\mu)-2m_1^2-1$$
Simplifying, this is equal to
$$\frac{1}{2}((r-2)m_1^2-(r-4)m_1-(\mu_2^2+\cdots +\mu_r^2)+\mu)-1$$
We have $(r-1)m_1^2-(\mu_2^2+\cdots+\mu_r^2)=\sum_{i=2}^r(m_1^2-\mu_i^2)=\sum_{i=2}^rm_i(m_1+\mu_i)$, and $\mu=(r-1)m_1-(m_2+\cdots +m_r)$. Plugging this, the above is equal to
$\frac{1}{2}(\sum_{i=2}^r(m_i(\mu_i-1)+m_1(n-2m_1+3)))-1$.
Since $n\ge 2m_1-2$, then $n-2m_1+3>0$. The first term could have some negative terms. This will happen if $m_i=m_1$ for some $i>1$. However, a direct computations shows that even in this case $I>0$.

\end{proof}

We have the following,
\begin{lemma}\label{lem1}
Suppose that $l\ge 2$. Assume that at least two of the representations $\pi_i$  are representations of Speh type. Then the dimension equation \eqref{dim2} is not satisfied.
\end{lemma}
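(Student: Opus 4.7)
The plan is to reduce the lemma to a direct dimension computation for Speh-type representations, and then observe that just two such summands already overshoot the right hand side of \eqref{dim2}.

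First I would compute the dimension of a general Speh-type representation. If $\mathcal{O}(\pi) = (q^m)$ with $n=qm$, then the partition has $k_i = q$ for $1 \le i \le m$, so formula \eqref{dimpart} gives
\[
\dim \pi = \tfrac12(n^2+n) - q\sum_{i=1}^m i = \tfrac12(n^2+n) - q\cdot\tfrac{m(m+1)}{2} = \tfrac12 n(n-m).
\]
Next I would use the standing hypothesis that no $\pi_i$ is one-dimensional, which rules out $\mathcal{O}(\pi)=(1^n)$; in Speh-type language this forces $q\ge 2$ and hence $m \le n/2$.

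Now suppose $\pi_1$ and $\pi_2$ are both Speh-type, with associated orbits $(q_1^{m_1})$ and $(q_2^{m_2})$. By the previous step, $m_1 + m_2 \le n$, so
\[
\dim \pi_1 + \dim \pi_2 = \frac{n(2n - m_1 - m_2)}{2} \ge \frac{n^2}{2} > \frac{n(n-1)}{2}.
\]
Since every other $\pi_j$ has $\dim \pi_j = \tfrac12 \dim \mathcal{O}(\pi_j) \ge 0$, the left hand side of \eqref{dim2} already strictly exceeds its right hand side from just the two Speh contributions. This contradicts \eqref{dim2}, proving the lemma.

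There is no real obstacle here: the argument is just the explicit formula $\dim \pi = n(n-m)/2$ for Speh-type, combined with the elementary bound $m \le n/2$ coming from non-triviality of the representation. Everything reduces to a one-line inequality.
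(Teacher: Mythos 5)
Your proof is correct and follows essentially the same line as the paper's: both arguments show that the two Speh-type factors alone already contribute at least $n^2/2>\frac{1}{2}n(n-1)$ to the left-hand side of \eqref{dim2}. The only cosmetic difference is that you compute $\dim\pi=\frac{1}{2}n(n-m)$ directly from \eqref{dimpart}, whereas the paper bounds each orbit $(q^m)$ from below by the minimal Speh-type orbit $\mu=(2^{\frac{n}{2}})$ (resp.\ $(2^{\frac{n-1}{2}}1)$) and invokes Lemma \ref{lem2}; both routes rely, as you note, on the standing assumption that no $\pi_i$ is one-dimensional in order to exclude the orbit $(1^n)$.
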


\begin{proof}
Let $\mu=(2^{\frac{n}{2}})$ if $n$ is even, and $\mu=(2^{\frac{n-1}{2}}1)$
if $n$ is odd. Then $\mu^t=\left (\left (\frac{n}{2}\right )^2\right)$ if $n$ is even, and $\mu^t=\left (\left (\frac{n+1}{2}\right )
\left (\frac{n-1}{2}\right )\right)$ if $n$ is odd. It follows from Lemma \ref{lem2}, or by direct calculation, that equation \eqref{len1} holds with $\lambda=\mu$.

Consider  the $l$ representations $\pi_i$, and assume that $\pi_1$ and $\pi_2$ are two representations of Speh type. Then ${\mathcal O}(\pi_i)=(p_i^{q_i})$ for $i=1,2$. Hence ${\mathcal O}(\pi_i)\ge \mu$
where $\mu$ was defined above. Thus, recall that the dimension of a representation is a half of the dimension of the corresponding partition, we have
\begin{equation}\label{speh1}
\sum_{i=1}^2\text{dim}\ \pi_i=\frac{1}{2}
\sum_{i=1}^2\text{dim}\ {\mathcal O}(\pi_i)\ge \text{dim}\ \mu>\frac{1}{2}(n^2-n)\notag
\end{equation}
where the last inequality follows from equation \eqref{len1}, or by direct calculation.

\end{proof}

For a root $\gamma$ for $GL_n$ we shall denote by $\{x_\gamma(m)\}$ the one dimensional unipotent subgroup of $GL_n$.
We need the following trivial Lemma, whose proof is obtained by simple Fourier expansion,
\begin{lemma}\label{lem3}
Let $\alpha, \beta$ be two roots for the group $GL_n$ such that $\alpha+\beta$ is also a root. Let $f$ denote an automorphic function of $GL_n({\bf A})$. Consider the integral
\begin{equation}\label{int1}
\int\limits_{(F\backslash {\bf A})^2}f(x_{\alpha+\beta}(m_1)x_\beta(m_2))\psi(m_1)dm_1dm_2
\end{equation}
Then it is equal to
\begin{equation}\label{int2}
\int\limits_{\bf A}
\int\limits_{(F\backslash {\bf A})^2}f(x_\alpha(m_3)x_{\alpha+\beta}(m_1)x_\beta(m_2))\psi(m_1)dm_1dm_3dm_2\notag
\end{equation}
\end{lemma}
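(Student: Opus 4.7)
My plan is to reduce the identity to the single root-subgroup commutator relation in $GL_n$, then finish with Fourier inversion. Writing $\alpha = e_i - e_j$ and $\beta = e_j - e_k$ so that $\alpha+\beta = e_i - e_k$, the only nontrivial commutator among the three one-parameter subgroups involved is
\[
x_\alpha(m_3)\, x_\beta(m_2) = x_\beta(m_2)\, x_\alpha(m_3)\, x_{\alpha+\beta}(m_3 m_2),
\]
while $x_{\alpha+\beta}$ commutes with both $x_\alpha$ and $x_\beta$ since $2\alpha+\beta$ and $\alpha+2\beta$ are not roots of $GL_n$. This commutator is the entire geometric content of the lemma.

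Starting from the right-hand side, I would first commute $x_\alpha(m_3)$ past $x_{\alpha+\beta}(m_1)$ (they commute) and then past $x_\beta(m_2)$, picking up $x_{\alpha+\beta}(m_3 m_2)$; combining with the existing $x_{\alpha+\beta}(m_1)$ and changing variables $m_1 \mapsto m_1 - m_3 m_2$ in the $F\backslash\mathbf{A}$-integration (legitimate since $m_3 m_2 \in \mathbf{A}$) rewrites the integrand in the form
\[
f\bigl(x_{\alpha+\beta}(m_1)\, x_\beta(m_2)\, x_\alpha(m_3)\bigr)\,\psi(m_1)\,\bar\psi(m_3 m_2).
\]
Denoting by $H(m_2,m_3)$ the inner $m_1$-integral $\int_{F\backslash\mathbf{A}} f\bigl(x_{\alpha+\beta}(m_1) x_\beta(m_2) x_\alpha(m_3)\bigr)\,\psi(m_1)\,dm_1$, a direct check using left $x_\alpha(F)$-invariance of $f$ (and the same commutator) shows $H(m_2, m_3+\gamma) = \psi(\gamma m_2)\, H(m_2,m_3)$ for $\gamma\in F$, so that $H(m_2,m_3)\,\bar\psi(m_3 m_2)$ is genuinely $F$-periodic in $m_3$. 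The $m_3$-integral over $\mathbf{A}$, paired with integration of $m_2$ over $F\backslash\mathbf{A}$, then collapses by Fourier inversion / Poisson summation on the one-parameter subgroup $x_\alpha$ to recover exactly the LHS, completing the proof.

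The main obstacle I expect is purely bookkeeping: tracking the sign in the commutator, the structure constant, and the Haar measure normalizations so that the Poisson step produces the LHS with unit multiplicity and no residual character factor. Once the commutator identity and the compatibility $H(m_2,m_3+\gamma)=\psi(\gamma m_2)H(m_2,m_3)$ are in hand, the rest is the ``simple Fourier expansion'' the author refers to.
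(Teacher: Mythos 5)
The paper offers no written proof (it calls the lemma ``trivial'' and appeals to ``simple Fourier expansion''), so I compare your argument with the standard root-exchange computation the author clearly intends. You have correctly isolated the one nontrivial ingredient, the Chevalley commutator $x_\alpha(a)x_\beta(b)=x_{\alpha+\beta}(ab)x_\beta(b)x_\alpha(a)$, and your quasi-periodicity identity $H(m_2,m_3+\gamma)=\psi(\gamma m_2)H(m_2,m_3)$ for $\gamma\in F$ is correct. The problem is the final analytic step. Having multiplied by $\bar\psi(m_3m_2)$ to make the integrand \emph{exactly} $F$-periodic in $m_3$, you then integrate $m_3$ over all of ${\bf A}$; but $\int_{\bf A}P(m_3)\,dm_3=\sum_{\gamma\in F}\int_{F\backslash{\bf A}}P(m_3)\,dm_3$ for a periodic $P$, which is a constant summed over $F$ --- it diverges (or is $0\cdot\infty$), it does not ``collapse by Poisson summation.'' The distributional identity $\int_{\bf A}\bar\psi(m_3m_2)\,dm_3=\delta_{m_2=0}$ that you are implicitly invoking would require $H(m_2,\cdot)$ to be independent of $m_3$ over all of ${\bf A}$, not merely quasi-periodic under rational shifts, and that is false.

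The source of the trouble is the assignment of domains: you have taken $m_3\in{\bf A}$ and $m_2\in F\backslash{\bf A}$, whereas in \eqref{int2} the differentials are ordered $dm_1\,dm_3\,dm_2$, so the inner $(F\backslash{\bf A})^2$ is the $(m_1,m_3)$-integration and the outer $\int_{\bf A}$ is over $m_2$. With that reading the ``simple Fourier expansion'' works: expand the $F$-periodic function $m_3\mapsto f(x_\alpha(m_3)g)$ on $F\backslash{\bf A}$ and evaluate at $m_3=0$, giving $f(g)=\sum_{\delta\in F}\int_{F\backslash{\bf A}}f(x_\alpha(m_3)g)\bar\psi(\delta m_3)\,dm_3$; substitute $g=x_{\alpha+\beta}(m_1)x_\beta(m_2)$ into \eqref{int1}; for each $\delta$ use your commutator plus the shift $m_1\mapsto m_1-\delta m_3$ to convert the character $\bar\psi(\delta m_3)$ into the translate $m_2\mapsto m_2+\delta$; and then $\sum_{\delta\in F}\int_{F\backslash{\bf A}}(\cdot)(m_2+\delta)\,dm_2=\int_{\bf A}(\cdot)(m_2)\,dm_2$ unfolds the $m_2$-integral to ${\bf A}$. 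This is the same two ingredients you identified (commutator plus Fourier inversion on the compact group $F\backslash{\bf A}$), but with the roles of the two variables exchanged; as you have set it up, the last step genuinely fails rather than being a matter of bookkeeping.
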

In particular, integral \eqref{int1} is zero for all choice of data if and only if the integral 
\begin{equation}\label{int3}
\int\limits_{(F\backslash {\bf A})^2}f(x_{\alpha+\beta}(m_1)x_\alpha(m_2))\psi(m_1)dm_1dm_2\notag
\end{equation}
is zero for all choice of data.

\section{The case $l=2$}\label{l2}
In this section we study Conjecture \ref{conj2} when $l=2$. As mentioned in the Introduction, in this paper we study this Conjecture for Speh type representations, and for Eisenstein series. Thus,  because of Lemma \ref{lem1}, we may assume that one of the two representations is an Eisenstein series. In details, for $1\le i\le r$, let $\tau_i$ denote an automorphic representation of the group $GL_{m_i}({\bf A})$ where we assume that $m_i\ge m_{i+1}$. Let $Q$ denote a parabolic subgroup of $GL_n$, whose Levi part is
$M=GL_{m_1}\times \ldots\times GL_{m_r}$. Then $\tau=\tau_1\times\tau_2\times\ldots\times\tau_r$ is a representation of $M({\bf A})$.
Denote by $E_\tau(g,\bar{s})$ the Eisenstein series of $GL_n({\bf A})$ 
attached to the induced representation $Ind_{Q({\bf A})}^{GL_n({\bf A})}\tau\delta_Q^{\bar{s}}$. We will  assume that the Eisenstein series in question is in general position. By that we mean that we are in the domain where it is given by a convergent series, and hence we can carry out an unfolding process. 
It will be convenient to separate it into two cases. 
\subsection{Eisenstein series: The Trivial Case}\label{eis1} In this subsection we assume that the representation $\tau_1$ of the group $GL_{m_1}({\bf A})$ is the trivial representation. We recall that by construction, $m_1\ge m_i$ for all $i$.

Let $\pi$ denote an irreducible automorphic representation of $GL_n({\bf A})$. The integral we consider is
\begin{equation}\label{deg1}
\int\limits_{U_n(F)\backslash U_n({\bf A})}\varphi(u)E_\tau(u,s)\psi_U(u)du
\end{equation}
We have, see \cite{G3}, $\text{dim}\ E(g,s)=\ \text{dim} U(Q)+\text{dim}\ \tau$. Hence,
$\text{dim}\ E(g,s)=\text{dim}\ \tau+\sum_{1\le i<j\le r}m_im_j$.  Then the dimension equation attached to integral \eqref{deg1} is given by
\begin{equation}\label{deg2}
\text{dim}\ \pi+\text{dim}\ \tau+\sum_{1\le i<j\le r}m_im_j=\frac{1}{2}(n^2-n)
\end{equation}

Our main result in this section is
\begin{proposition}\label{prop1}
Assume that $\pi$ satisfies equation \eqref{deg2}.  Then integral \eqref{deg1} is  zero for all choice of data. 
\end{proposition}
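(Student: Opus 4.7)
The plan is to combine the dimension equation \eqref{deg2} with Lemma \ref{lem2} to constrain ${\mathcal O}(\pi)$, and then unfold $E_\tau$ in integral \eqref{deg1} to produce, on each Bruhat cell, a Fourier coefficient of $\varphi$ attached to a partition whose shape is incompatible with that constraint. First, let $\mu$ be the partition of $n$ with $\mu^t=(m_1,m_2,\ldots,m_r)$, so that by \eqref{dimpart} one has $\text{dim}\ \mu=2\sum_{1\le i<j\le r}m_im_j$. If ${\mathcal O}(\pi)$ had length at most $n-m_1+1$, then applying Lemma \ref{lem2} with $\lambda={\mathcal O}(\pi)$ and dividing by two would give
\[
\text{dim}\ \pi+\sum_{1\le i<j\le r}m_im_j>\frac{1}{2}(n^2-n),
\]
which together with \eqref{deg2} forces $\text{dim}\ \tau<0$, a contradiction. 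Hence every part of ${\mathcal O}(\pi)$ is at most $m_1-1$.

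Next, since $\tau_1$ is trivial and $m_1$ is maximal, I would realize $E_\tau$ as induced in stages from the maximal parabolic $Q_1$ of $GL_n$ with Levi $GL_{m_1}\times GL_{n-m_1}$ and inducing data $1\otimes E_{\tau'}$, where $\tau'=\tau_2\times\cdots\times\tau_r$ and $E_{\tau'}$ is the corresponding Eisenstein series on $GL_{n-m_1}$. The standard Bruhat unfolding of integral \eqref{deg1} with respect to $Q_1$ expresses it as a sum, over representatives $w$ of $W_{M_1}\backslash W_n$ with $M_1=GL_{m_1}\times GL_{n-m_1}$, of orbit integrals of the form
\begin{equation}
\int_{(U_n\cap w^{-1}Q_1w)(F)\backslash U_n({\bf A})}\varphi(u)f(wu,s)\psi_U(u)\,du.\notag
\end{equation}
Because $\tau_1$ is trivial, the section $f$ transforms under the $GL_{m_1}$-block of $M_1$ only via $\delta_{Q_1}^s$; for the identity cell this already gives, by factoring $U_n=U_{m_1}\ltimes(V_{Q_1}U_{n-m_1})$, the Whittaker coefficient of $\varphi$ along the top $GL_{m_1}$-block, which corresponds to the partition $(m_1,1^{n-m_1})$ of first part $m_1$. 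For the other cells, conjugation by $w$ moves the free $GL_{m_1}$-block elsewhere, and by successive application of Lemma \ref{lem3} one extracts from each cell a Fourier coefficient of $\varphi$ attached to a partition $\lambda_w$ of $n$ whose first part is again at least $m_1$.

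Since by the first step every part of ${\mathcal O}(\pi)$ is at most $m_1-1$, we have $(\lambda_w)_1\ge m_1>{\mathcal O}(\pi)_1$, so $\lambda_w\not\le{\mathcal O}(\pi)$ in the dominance order and the Fourier coefficient of $\varphi$ along $\lambda_w$ vanishes for every $\varphi\in\pi$. Thus each cell contributes zero and integral \eqref{deg1} is zero. The main obstacle is the root-exchange bookkeeping on the non-identity cells: for each Weyl representative $w$ one must use Lemma \ref{lem3} carefully enough to verify that $\lambda_w$ has first part at least $m_1$, uniformly in $w$. While the identity cell and the ``swap'' cell are straightforward, the intermediate cells, where $w$ interleaves the two blocks, require the most careful Fourier-expansion argument.
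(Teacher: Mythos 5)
Your opening move is correct and is in fact a nice observation that the paper does not make explicitly: applying Lemma \ref{lem2} with $\lambda={\mathcal O}(\pi)$ and $\mu$ the partition with $\mu^t=(m_1m_2\ldots m_r)$, the dimension equation \eqref{deg2} would force $\dim\tau<0$ unless ${\mathcal O}(\pi)$ has length at least $n-m_1+2$, hence largest part at most $m_1-1$. The gap is in the second half, and it is the entire technical content of the proposition. After choosing permutation representatives $w$ for $Q(F)\backslash GL_n(F)/U_n(F)$, the section $f_\tau(w\cdot,s)$ is invariant under the $m_1$ full rows of $U_n$ indexed by the images of the top block, so each cell contributes a \emph{partial} Whittaker integral of $\varphi$ in which at most $n-m_1$ rows of $U_n$ are missing. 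The paper then spends the bulk of the proof on an inductive Fourier expansion (restoring the missing rows one coordinate at a time, using Lemma \ref{lem3} to exchange roots and conjugating by a Weyl element at each stage) showing that each cell becomes a \emph{sum} of terms, each containing some degenerate Whittaker coefficient $I(\epsilon_{j_1},\ldots,\epsilon_{j_p})$ with $p\le n-m_1$ as inner integration. The partitions one must therefore control are all those of length at most $n-m_1+1$, and these need not have first part at least $m_1$: for $n=6$, $m_1=3$, the composition $(1,2,1,2)$ gives the partition $(2211)$, of length $4=n-m_1+1$ but first part $2<m_1$. So the invariant you propose to track through the cells (first part of $\lambda_w$ at least $m_1$, one partition per cell) is not what the unfolding produces, and the step you defer as ``the main obstacle'' is precisely where the proof lives; as written the proposal is not a proof.

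That said, your step 1 would actually streamline the endgame. A partition of $n$ of length at most $n-m_1+1$ can never be dominated by a partition of length at least $n-m_1+2$ (compare partial sums at index $n-m_1+1$: the shorter partition already sums to $n$ there, the longer one does not), so once one has the paper's reduction to degenerate coefficients with $p\le n-m_1$, their vanishing follows directly from your constraint on ${\mathcal O}(\pi)$, without invoking Lemma \ref{lem2} and the dimension equation a second time as the paper does in its final paragraph. The correct repair of your outline is therefore: keep step 1, replace ``first part $\ge m_1$'' by ``length $\le n-m_1+1$'' as the invariant carried through the cells, and then supply the full root-exchange and Fourier-expansion argument that you postponed.
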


\begin{proof}

Unfolding the Eisenstein series, integral \eqref{deg1} is equal to
\begin{equation}\label{deg3}
\sum_{w\in Q(F)\backslash GL_n(F)/U_n(F)}\ \ 
\int\limits_{w^{-1}U_n(F)w\cap U_n(F)\backslash U_n({\bf A})}\varphi(u)f_\tau(wu,s)\psi_U(u)du
\end{equation}
The sum is finite and representatives can be taken to be Weyl elements. Factoring the measure, we obtain the integral
\begin{equation}\label{deg4}
\int\limits_{U_n^w(F)\backslash U_n^w({\bf A})}\varphi(u)\psi_U(u)du
\end{equation}
as inner integration to integral \eqref{deg3}. Here, given a Weyl element $w$, we denote $U_n^w=w^{-1}U_nw\cap U_n$. By means of Fourier expansions, we can express integral \eqref{deg4} as a sum of Fourier coefficients corresponding to a set of unipotent orbits ${\mathcal O}_1,\ {\mathcal O}_2,\ldots,{\mathcal O}_q$. Suppose that we show that
for all $1\le i\le q$ we have 
\begin{equation}\label{deg5}
\frac{1}{2}\text{dim}\ {\mathcal O}_i+\sum_{1\le i<j\le r}m_im_j>
\frac{1}{2}(n^2-n)
\end{equation}
It follows from equation \eqref{deg2} and \eqref{deg5}, that integral \eqref{deg4} is zero for all choice of data. Indeed, from 
\eqref{deg2} and \eqref{deg5} we obtain that $\frac{1}{2}\text{dim}\ {\mathcal O}_i> \text{dim}\ \pi$ for all $1\le i\le q$. Hence 
$\text{dim}\ {\mathcal O}_i > \text{dim}\ {\mathcal O}(\pi)$. By the definition of ${\mathcal O}(\pi)$, we deduce that integral \eqref{deg4} is zero for all choice of data.
But the vanishing of all these integrals   implies 
that integral \eqref{deg1} is zero for all choice of data which is what we want to prove.

We fix some notations. For $1\le i\le n$, let $V_i$ denote the unipotent subgroup of $U_n$ generated by all matrices of the form
$I_n+x_je_{i,j}$ where $i<j\le n$. Here $e_{i,j}$ is the matrix of size $n$ whose $(i,j)$ entry is one, and all other entries are zero.
We define $V_n$ to be the identity group.
Let $1\le i_1<i_2<\ldots <i_k\le n$ denote a set of natural numbers. 
Denote $V_{i_1,\ldots,i_k}=V_{i_1}V_{i_2}\ldots V_{i_k}$. Then we
claim that given a Weyl element $w\in Q\backslash GL_n/U_n$, there is a set $1\le i_1<i_2<\ldots <i_k\le n$ where $k\le n-m_1$, such that the integral 
\begin{equation}\label{deg6}
I(i_1,\ldots,i_{k-1},i_k)=\int\limits_{U_n(F)V_{i_1,\ldots,i_k}({\bf A})\backslash 
U_n({\bf A})}\varphi(u)\psi_U(u)du
\end{equation}
is an inner integration to integral \eqref{deg4}. This claim follows from the fact that every such $w$ can be chosen to be a permutation group. Hence, if $w$ has an entry one at the $(a,j_a)$ position, where 
$1\le a\le m_1$, then the group $V_{j_1,\ldots, j_{m_1}}$ is contained
in $U_n^w$. This means that there are at most $n-m_1$ indices and subgroups $V_i$
which are not contained inside $U_n^w$. It is possible that a subgroup
of these $V_i$ will be in $U_n^w$. That is why \eqref{deg6} is 
possibly an inner integration to integral \eqref{deg4}.
Thus, it is enough to prove that given a representation $\pi$ which satisfies  equation \eqref{deg2}, then for all sets $\{ i_1,\ldots, i_k\}$ as above, the integrals  $I(i_1,\ldots,i_{k-1},i_k)$ are zero for all choice of data. 

To prove that we argue by induction. First, let 
$\epsilon=(\epsilon_1,\ldots,\epsilon_{n-1})$ where $\epsilon_i=0,1$. 
Define the character $\psi_{U,\epsilon}$ of the group $U_n$ as follows. Given $u=(u_{i,j})\in U_n$, define $\psi_{U,\epsilon}(u)=
\psi(\epsilon_1 u_{1,2}+\cdots +\epsilon_{n-1} u_{n-1,n})$. We now 
define the set of Fourier coefficients
\begin{equation}\label{deg7}
I(i_1,\ldots,i_{k-1},i_k; \epsilon_{j_1},\ldots, \epsilon_{j_p})=\int\limits_{U_n(F)V_{i_1,\ldots,i_k}({\bf A})\backslash 
U_n({\bf A})}\varphi(u)\psi_{U,\epsilon}(u)du
\end{equation}
where all the $j_1,\ldots, j_p$ components of $\epsilon$ are zeros, and all other components are one. Notice that when there are no
$i_m$ indices then the integration is over $U_n(F)\backslash  U_n({\bf A})$. We shall denote these integrals by $I(\epsilon_{j_1},\ldots, \epsilon_{j_p})$. If further there are also no $\epsilon_{j_m}$ then 
integral \eqref{deg7} is the Whittaker coefficient of $\pi$. We shall
denote this integral by $I_0$.

Start with integral $I(i_1,\ldots,i_{k-1},i_k)$. By means of Fourier expansions we will prove that this integral is equal to a sum of
integrals such that the integrals $I(i_1,\ldots,i_{k-1},i_k+a)$ and
the integral $I(i_1,\ldots,i_{k-1}; \epsilon_{i_k})$
appear as inner integrations to each summand. Here $1\le a\le n-i_k$. Notice that
when $a=n-i_k$, we have $I(i_1,\ldots,i_{k-1},i_k+a)=I(i_1,\ldots,i_{k-1})$. Repeating this process with each of the integrals 
$I(i_1,\ldots,i_{k-1},i_k+a)$, we deduce that the integral 
$I(i_1,\ldots,i_{k-1},i_k)$ is  a sum of integrals such that $I(i_1,\ldots,i_{k-1})$ and  $I(i_1,\ldots,i_{k-1}; \epsilon_{i_k+a})$ appear as inner integration in each summand. Here $0\le a\le n-i_k-1$. Continuing this process with this set of integrals we finally deduce that $I(i_1,\ldots,i_{k-1},i_k)$ is a sum of integrals such   that
$I(\epsilon_{j_1},\ldots, \epsilon_{j_p})$ appear as an inner integration for some set of indices $1\le j_1<j_2<\ldots
<j_p\le n-1$ and $0\le p\le n-m_1$. Notice that the bound on
$p$ follows from the fact that the number $k$ as defined in \eqref{deg6} is bounded by $n-m_1$. 
Thus, to complete the proof we will first relate  $I(i_1,\ldots,i_{k-1},i_k)$ to the integrals $I(i_1,\ldots,i_{k-1},i_k+a)$ and $I(i_1,\ldots,i_{k-1}; \epsilon_{i_k})$ as mentioned above. Then we  prove that the integrals $I(\epsilon_{j_1},\ldots, \epsilon_{j_p})$ are zero for all choice of data.

Consider the integral 
\begin{equation}\label{deg8}
I(i_1,\ldots,i_{k-1},i_k)=\int\limits_{U_n(F)V_{i_1,\ldots,i_{k-1}}({\bf A})V_{i_k}({\bf A})\backslash 
U_n({\bf A})}\varphi(u)\psi_U(u)du
\end{equation}
where we used the fact that $V_{i_1,\ldots,i_k}=V_{i_1,\ldots,i_{k-1}}
V_{i_k}$.  Expand this integral along the one dimension unipotent subgroup $\{x_1(y_1)=I_n+y_1e_{i_k,n}\}$. Thus, $I(i_1,\ldots,i_{k-1},i_k)$ is equal to
\begin{equation}\label{deg9}
\int\int\limits_{F\backslash {\bf A}}
\varphi(ux_1(y_1))\psi_U(u)dy_1du+\sum_{\eta\in F^*}\int\int\limits_{F\backslash {\bf A}}\varphi(ux_1(y_1)t(\eta))\psi_U(u)\psi(y_1)dy_1du
\end{equation}
where the integration over $u$ is as in integral \eqref{deg8}, and $t(\eta)$ is a certain torus element. Consider each term of the right most integral in equation \eqref{deg9}. In the notations of Lemma
\ref{lem3}, we denote $x_{\alpha+\beta}(y_1)=x_1(y_1)$. Also we denote
$x_\alpha(z_1)=I_n+z_1e_{i_k,n-1}$ and $x_\beta(z_2)=I_n+z_2e_{i_k+1,n}$. Then the conditions of the Lemma hold and we can apply it. 
We repeat this process with $x_\alpha(z_1)=I_n+z_1e_{i_k,j}$ and $x_\beta(z_2)=I_n+z_2e_{i_k+n-j,n}$ in decreasing order in $j$ for all 
$i_k+1\le j\le n-2$. Then, after applying Lemma \ref{lem3} for $n-i_k-1$ times, we conjugate by the Weyl element 
$$\begin{pmatrix} I_{i_k}&&\\ &&1\\ &I_{n-i_k-1}\end{pmatrix}$$
Then it is not hard to check that we obtain the integral 
$I(i_1,\ldots,i_{k-1},i_k+1)$ as inner integration. Thus, we conclude that each summand in the right  term integral of equation \eqref{deg9} 
contains the integral $I(i_1,\ldots,i_{k-1},i_k+1)$ as inner integration. Next consider the left term integral in equation \eqref{deg9}. We expand it along the unipotent subgroup 
$\{x_2(y_2)=I_n+y_2e_{i_k,n-1}\}$. There are two terms. In the first, which corresponds to the non trivial terms in the expansion, we deduce
as in the case of $\{x_1(y_1)\}$ that the integral $I(i_1,\ldots,i_{k-1},i_k+2)$ appear as inner integration. In the second term, which corresponds to the trivial term in the expansion, we can
further expand along $\{x_3(y_3)=I_n+y_3e_{i_k,n-2}\}$. Continuing 
this process we get the first above claim, stated before integral \eqref{deg8}, regarding the induction process. Notice that the integral $I(i_1,\ldots,i_{k-1}; \epsilon_{i_k})$ is obtained by taking
in each expansion the constant term.

Finally, we need to prove that the integrals $I(\epsilon_{j_1},\ldots, \epsilon_{j_p})$ are zero for all choice of data. But this follows 
easily from Lemma \ref{lem2}. Indeed, it follows from \cite{G3}, that this Fourier coefficient  corresponds to the following unipotent orbit. Consider the numbers $\{j_1,j_2-j_1,\ldots,j_p-j_{p-1},
n-j_p\}$. Rearranging them in decreasing order, we obtain a partition $\lambda$ of $n$ whose length is $p+1\le n-m_1+1$. From Lemma \ref{lem2}, and from equation \eqref{deg5}, we deduce that 
$$\frac{1}{2}\ \text{dim}\ \lambda+ \text{dim} E_\tau(g,s)>\frac{1}{2}(n^2-n)$$ But, as explained above, this contradicts the dimension equation \eqref{deg2}.

\end{proof}

\subsection{Eisenstein Series: The Nontrivial Case}\label{eis2}
We keep the notations of the previous Subsection. The second case to consider is integral \eqref{deg1} where the Eisenstein series is the representation $E_\tau(g,\bar{s})$ as was defined in the beginning of this Section, and such that the representation $\tau_1$ is a Speh type representation. Then we may assume that the other representation in integral \eqref{deg1}, is
either a Speh type representation, or it is a similar Eisenstein series denoted by $E_\sigma(g,\bar{\nu})$.  By that we mean the following.
Let $R$ denote a parabolic subgroup of $GL_n$ whose levi part is $L=GL_{n_1}\times GL_{n_2}\times
\cdots\times GL_{n_k}$ with $n_i\ge n_{i+1}$. Let $\sigma_i$ denote an irreducible automorphic representation of $GL_{n_i}({\bf A})$, and denote $\sigma=
\sigma_1\times\ldots\times\sigma_k$. Form the Eisenstein series $E_\sigma(g,\bar{\nu})$ attached to the induced representation 
$Ind_{R({\bf A})}^{GL_n({\bf A})}\delta_R^{\bar{\nu}}$ where $\bar{\nu}$ is a multi complex variable. We also may assume that $\sigma_1$ is a Speh type representation. For if it is the trivial representation, then we may apply the argument of the Subsection \ref{eis1}. 

With these notations we prove,
\begin{proposition}\label{prop3}
With the above notations, let $\pi$ denote a Speh type representation, or  assume that $\pi=E_\sigma(g,\bar{\nu})$. Then 
$$\text{dim}\ E_\tau(g,\bar{s})+\text{dim}\ \pi>\frac{1}{2}(n^2-n)$$
In particular, the dimension equation \eqref{dim2} does not hold in this case.
\end{proposition}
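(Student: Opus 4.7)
The plan is to prove Proposition \ref{prop3} by a direct combinatorial computation of dimensions, using the identity $\text{dim}\ E_\tau = \text{dim}\ \tau + \sum_{1 \le i < j \le r} m_i m_j$ recalled at the start of Subsection \ref{eis1}, together with formula \eqref{dimpart}. No further unfolding or use of Lemma \ref{lem2} is required; the Speh-type hypothesis on $\tau_1$, and on $\pi$ (Case A) or on $\sigma_1$ (Case B), enters only through the rectangular shape $(q^p)$ of the relevant orbit, together with nontriviality, which forces the ``height'' $q$ to satisfy $q \ge 2$.

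First I would lower bound $\text{dim}\ E_\tau$. Formula \eqref{dimpart} applied to ${\mathcal O}(\tau_1) = (q_1^{p_1})$ inside $GL_{m_1}$ gives $\text{dim}\ \tau_1 = \tfrac{1}{2} m_1(m_1 - p_1)$, where $m_1 = p_1 q_1$, while the remaining $\text{dim}\ \tau_i$ contribute nonnegatively. Combining with $\sum_{i<j} m_i m_j = \tfrac{1}{2}(n^2 - \sum_i m_i^2)$ yields
\begin{equation*}
2\,\text{dim}\ E_\tau \;\ge\; n^2 - m_1 p_1 - \sum_{i \ge 2} m_i^2 .
\end{equation*}
Since $\tau_1$ is nontrivial, $q_1 \ge 2$, forcing $p_1 \le m_1/2$; together with $m_i \le m_1$ for $i \ge 2$, the correction is bounded by $m_1 p_1 + \sum_{i \ge 2} m_i^2 \le \tfrac{1}{2} m_1^2 + m_1(n - m_1) = m_1(n - m_1/2) \le n^2/2$.

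In Case A, with $\pi$ Speh and ${\mathcal O}(\pi) = (q^p)$, formula \eqref{dimpart} gives $\text{dim}\ \pi = \tfrac{1}{2} n(n - p)$, and $q \ge 2$ yields $p \le n/2$. Summing the two lower bounds, the desired inequality $\text{dim}\ E_\tau + \text{dim}\ \pi > \tfrac{1}{2}(n^2 - n)$ reduces to
\begin{equation*}
n(n - p) + n \;>\; m_1 p_1 + \sum_{i \ge 2} m_i^2 ,
\end{equation*}
which is strict because the left side is at least $n^2/2 + n$ while the right side is at most $n^2/2$. In Case B, where $\pi = E_\sigma(g,\bar\nu)$ with $\sigma_1$ Speh of orbit $(q_1'^{p_1'})$ on $GL_{n_1}$, the same argument bounds $\text{dim}\ E_\sigma$ from below, and the problem reduces to $n^2 + n > C_\tau + C_\sigma$ with both $C_\tau = m_1 p_1 + \sum_{i \ge 2} m_i^2$ and $C_\sigma = n_1 p_1' + \sum_{i \ge 2} n_i^2$ at most $n^2/2$, which is again strict.

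The one step requiring care is verifying that the nontriviality of the relevant Speh pieces (i.e.\ $q_1 \ge 2$ in every case, and $q \ge 2$ in Case A, or $q_1' \ge 2$ in Case B) is precisely what enforces the crucial inequalities $p_1 \le m_1/2$ and $p \le n/2$ (or $p_1' \le n_1/2$); without this rectangular constraint the correction terms could grow up to $m_1^2$ and the strict inequality would fail. Once these elementary estimates are in hand, the strict inequality $\text{dim}\ E_\tau + \text{dim}\ \pi > \tfrac{1}{2}(n^2 - n)$ directly contradicts the dimension equation \eqref{dim2}.
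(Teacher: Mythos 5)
Your proof is correct, but it takes a genuinely different route from the paper's. The paper does not compute dimensions directly: it invokes the description from \cite{G3} of ${\mathcal O}(E_\tau(g,\bar{s}))$ as the induced orbit ${\mathcal O}(\tau_1)+\cdots+{\mathcal O}(\tau_r)$, observes that the Speh shape of $\tau_1$ together with $m_i\le m_1$ forces this partition (and likewise ${\mathcal O}(\pi)$) to have length at most $n/2$, and then uses that any partition of $n$ of length at most $n/2$ dominates $\mu=(2^{n/2})$ (resp.\ $(2^{(n-1)/2}1)$), so that each orbit has dimension at least $\text{dim}\ \mu\ge\frac{1}{2}(n^2-1)$ by monotonicity of dimension in the closure order. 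You instead work from the identity $\text{dim}\ E_\tau=\text{dim}\ U(Q)+\text{dim}\ \tau$ (stated in Subsection \ref{eis1}, and valid for general $\tau$) together with \eqref{dimpart}, and run an elementary quadratic estimate; your key observation that nontriviality of the rectangular orbit forces $p_1\le m_1/2$ (hence the correction term is at most $n^2/2$) plays exactly the role that the length bound plays in the paper. Your computations check out: $\text{dim}\ \tau_1=\frac12 m_1(m_1-p_1)$, $2\sum_{i<j}m_im_j=n^2-\sum_i m_i^2$, and the bounds $m_1p_1+\sum_{i\ge2}m_i^2\le m_1(n-m_1/2)\le n^2/2$ and $n(n-p)\ge n^2/2$ are all valid, giving the strict inequality in both cases. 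What the paper's argument buys is uniformity with Lemma \ref{lem1} and the rest of Section \ref{general} (everything is phrased through the single orbit $\mu$ and the dominance order); what yours buys is that it is self-contained arithmetic, avoids the dominance-order step and Lemma \ref{lem2} entirely, and sidesteps the paper's slightly loose length bound $\frac{m_1}{2}+1<\frac{n}{2}$ (which as literally stated can fail for $m_1$ close to $n$, though the correct bound $\frac{m_1}{2}$ repairs it).
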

\begin{proof}
We use Lemma \ref{lem2}. From \cite{G3} we deduce that the orbit 
${\mathcal O}(E_\tau(g,\bar{s}))$ is the suitable induced orbit 
as defined in \cite{C-M}. This implies that 
$${\mathcal O}(E_\tau(g,\bar{s}))={\mathcal O}(\tau_1)+\cdots+
{\mathcal O}(\tau_r)$$
The definition of addition of two partitions is given in \cite{C-M}
as follows. If $\lambda_1=(k_1k_2\ldots k_p)$ and $\lambda_2=(k_1'k_2'\ldots k_q')$, then $\lambda_1+\lambda_2=((k_1+k_1')(k_2+k_2')\ldots)$. Since $\tau_1$ is a representation of Speh type of the group $GL_{m_1}({\bf A})$, then the
length of ${\mathcal O}(\tau_1)$ is not greater then $\frac{m_1}{2}+1$. Since $\frac{m_1}{2}+1< \frac{n}{2}$ and  $m_i\le m_1$ for all $i$, then the length of ${\mathcal O}(\tau_i)$ is at most $n/2$. From this we deduce that the length of  ${\mathcal O}(E_\tau(g,\bar{s}))$ is at most $n/2$. Similarly, if $\pi=E_\sigma(g,\bar{\nu})$ or if $\pi$ is a representation of Speh type then its length is at most $n/2$. But every partition of $n$ whose length is at most $n/2$ is greater than or equal to 
$\mu=(2^{\frac{n}{2}})$ if $n$ is even, and $\mu=(2^{\frac{n-1}{2}}1)$
if $n$ is odd. Hence
$$\text{dim}\ E_\tau(g,\bar{s})+\text{dim}\ \pi=
\frac{1}{2}(\text{dim}\ {\mathcal O}(E_\tau(g,\bar{s}))+
\text{dim}\ {\mathcal O}(\pi))\ge
\text{dim}\ \mu\ge\frac{n^2-1}{2}$$
From this the proof follows.
\end{proof}

\section{The case when $l\ge 3$}\label{general}
In this section we consider the case when $l\ge 3$. Let $\pi_i$ denote $l$ automorphic representations of $GL_n({\bf A})$.
Assume that  $\pi_i=E_{\tau^{(i)}}(g,\bar{s}_i)$ for all $1\le i\le l-1$. Here, the representations $E_{\tau^{(i)}}(g,\bar{s}_i)$ were defined at the beginning of Section \ref{l2}. Assume also that 
$\tau_1^{(i)}$ is the trivial representation for all $1\le i\le l-1$.

The integral we consider is integral \eqref{global2}. We can write it as
\begin{equation}\label{gen1}
\int\limits_{U_n(F)\backslash U_n({\bf A})}
E_{\tau^{(1)}}(u,\bar{s}_1)\Phi(u)\psi_U(u)du
\end{equation}
Here $\Phi(g)=E_{\tau^{(2)}}(g,\bar{s}_2)\ldots 
E_{\tau^{(l-1)}}(g,\bar{s}_{l-1})\varphi_{\pi_l}(g)$. 

Unfold the Eisenstein series. Then carry out the same Fourier expansion process as described in the proof of Proposition \ref{prop1}. We deduce from that that integral \eqref{gen1} is zero for all choice of data if the integrals
\begin{equation}\label{gen2}
I_\Phi(\epsilon_{j_1},\ldots,\epsilon_{j_p})=
\int\limits_{U_n(F)\backslash U_n({\bf A})}
\Phi(u)\psi_{U,\epsilon}(u)du
\end{equation}
are all zero for all choice of data. All the notations were defined 
in the proof of Proposition \ref{prop1}, and we have that $p\le n-m_1^{(1)}$. It follows from the definition of $\psi_{U,\epsilon}$ that this character is not trivial at least on 
$(n-1)-(n-m_1^{(1)})=m_1^{(1)}-1$ simple roots. 

Next, in the integral $I_\Phi(\epsilon_{j_1},\ldots,\epsilon_{j_p})$
we unfold the Eisenstein series $E_{\tau^{(2)}}(g,\bar{s}_2)$ and repeat this process again. Then we deduce that integrals  \eqref{gen2}
are zero for all choice of data, if the integrals
\begin{equation}\label{gen3} 
\int\limits_{U_n(F)\backslash U_n({\bf A})}
\Phi_1(u)\psi_{U,\epsilon'}(u)du
\end{equation}
are zero for all choice of data. Here $\Phi_1(g)=E_{\tau^{(3)}}(g,\bar{s}_3)\ldots 
E_{\tau^{(l-1)}}(g,\bar{s}_{l-1})\varphi_{\pi_l}(g)$, and 
$\epsilon'=(\epsilon_1',\ldots,\epsilon_{n-1}')$ is such that at least
$(m_1^{(1)}-1)-(n-m_1^{(2)})=m_1^{(1)}+m_1^{(2)}-n-1$ of the entries are one. 

Continuing by induction we eventually get as inner integrations, the integrals \eqref{gen3}
with $\Phi_1=\varphi_{\pi_l}$ and $\epsilon'$ is a vector with at least $m_1^{(1)}+\ldots+m_1^{(l-1)}-(l-2)n-1$ entries which are equal to one. We conclude that if all such integrals are zero for all choice of data, then integral \eqref{gen1} is zero for all choice of data. 

We have
\begin{corollary}\label{cor1}
For $1\le i\le l$, let $\pi_i$ denote $l$ Eisenstein series 
$E_{\tau^{(i)}}(g,\bar{s}_i)$ as defined in the beginning of Section \ref{l2}. Assume that for all $i$ the representation $\tau_1^{(i)}$ is the trivial representation. If 
\begin{equation}\label{cond1}
\sum_{i=1}^lm_1^{(i)}\ge n(l-1)+2
\end{equation}
then integral \eqref{global2} is zero for all choice of data.
\end{corollary}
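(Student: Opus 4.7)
The plan is to extend the inductive unfolding procedure set up in the discussion preceding the Corollary by one additional step, applied to the last Eisenstein series $\pi_l=E_{\tau^{(l)}}$ itself. I would first recall that, after the $l-1$ unfoldings of $E_{\tau^{(1)}},\ldots,E_{\tau^{(l-1)}}$ described there, the vanishing of \eqref{global2} has been reduced to the vanishing of integrals
\begin{equation*}
\int\limits_{U_n(F)\backslash U_n({\bf A})}\varphi_{\pi_l}(u)\psi_{U,\epsilon'}(u)\,du,
\end{equation*}
where the character $\epsilon'$ has at least $\sum_{i=1}^{l-1}m_1^{(i)}-(l-2)n-1$ components equal to one.

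Since in the present setting $\pi_l=E_{\tau^{(l)}}$ is again an Eisenstein series with $\tau_1^{(l)}$ trivial, I would apply the same Bruhat decomposition plus iterated Fourier expansion argument used in Proposition \ref{prop1} to unfold $E_{\tau^{(l)}}$ one more time. This converts each of the above integrals into a sum of inner integrations against a further character $\psi_{U,\epsilon''}$, where $\epsilon''$ has at most $n-m_1^{(l)}$ additional zero entries compared to $\epsilon'$; this bound reproduces exactly the bookkeeping used in the previous steps, coming from the fact that every Weyl representative $w\in Q^{(l)}\backslash GL_n/U_n$ contains a subgroup $V_{j_1,\ldots,j_{m_1^{(l)}}}$ inside $U_n^w$. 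After this final unfolding, all $l$ Eisenstein sections $f_{\tau^{(i)}}$ have been absorbed into the outer integrations over the complementary unipotent directions, so no automorphic function remains inside; the inner integration reduces to
\begin{equation*}
\int\limits_{U_n(F)\backslash U_n({\bf A})}\psi_{U,\epsilon''}(u)\,du.
\end{equation*}
By the combined count, $\epsilon''$ has at least $\sum_{i=1}^{l}m_1^{(i)}-n(l-1)-1$ components equal to one, which under the hypothesis \eqref{cond1} is at least one. Hence $\psi_{U,\epsilon''}$ is a nontrivial character on the compact quotient $U_n(F)\backslash U_n({\bf A})$, and its integral vanishes, giving the vanishing of \eqref{global2}.

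The main technical obstacle I anticipate is verifying that the argument of Proposition \ref{prop1} extends smoothly to this final step, where the Eisenstein series being unfolded is paired with a generalized Whittaker character $\psi_{U,\epsilon'}$ rather than the full $\psi_U$, and where the remaining integrand inside is the empty product. Since that argument uses only the structure of $Q^{(l)}\backslash GL_n/U_n$ and iterated Fourier expansions along one-parameter subgroups $\{x_\gamma(m)\}$ via Lemma \ref{lem3} --- neither of which depends on the specific form of the inner integrand --- the same expansions should go through, and the count of ``ones remaining in $\epsilon$'' should propagate to the $l$-th step exactly as in the preceding ones.
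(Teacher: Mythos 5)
Your proposal is correct and follows essentially the same route as the paper: one more application of the unfolding-plus-Fourier-expansion process to $\pi_l=E_{\tau^{(l)}}$, with the count $\sum_{i=1}^{l}m_1^{(i)}-n(l-1)-1\ge 1$ guaranteeing that a nontrivial character survives, so that $\int_{F\backslash {\bf A}}\psi(r)\,dr=0$ appears as an inner integration. The only cosmetic imprecision is that the final inner integral lives on the appropriate quotient of $U_n$ by the subgroups $V_{i_1,\ldots,i_k}$ rather than on all of $U_n(F)\backslash U_n({\bf A})$, but this does not affect the vanishing argument.
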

\begin{proof}
Applying the above process, the condition \eqref{cond1} implies that we obtain the integral $\int\psi(r)dr$ as inner integration to each of the integrals of the type of integral \eqref{gen3}. Here $r$ is integrated over $F\backslash {\bf A}$. From this the Corollary follows.

\end{proof}

Notice that in the above Corollary we did not assume that the dimension equation \eqref{dim2} holds.

Let $\pi_i$ denote $l$ automorphic representations of $GL_n({\bf A})$.
Because of Proposition \ref{prop3} we may assume that there is at most one representation, denoted by $\pi_l$, such that  ${\mathcal O}(\pi_l)\ge \mu=(2^{\frac{n}{2}})$ if $n$ is even, and ${\mathcal O}(\pi_1)\ge\mu=(2^{\frac{n-1}{2}}1)$ if $n$ is odd. This means that all other $l-1$ representations are of the form $E_{\tau^{(i)}}(g,\bar{s}_i)$, with the conditions  that $\tau_1^{(i)}$ is the trivial representation and that  $m_1^{(i)}> n/2$. Indeed, as argued in Proposition \ref{prop3}, if for some $i$ we have $m_1^{(i)}\le n/2$, then ${\mathcal O}(E_{\tau^{(i)}}(g,\bar{s}_i))\ge \mu$ where $\mu$ was defined above. We have,
\begin{proposition}\label{prop4}
For $1\le i\le l-1$, let $\pi_i$ denote the $l-1$ Eisenstein series defined above. 
Assume that $\pi_l=E_{\tau^{(l)}}(g,\bar{s}_l)$ and that there is a $j$ such that $\tau_j^{(l)}$ is the trivial representation. 
Assume also that the dimension equation \eqref{dim2} holds for these $l$ representations. Then
\begin{equation}\label{cond2}
\sum_{i=1}^{l-1}m_1^{(i)}+m_j^{(l)}\ge n(l-1)+2\notag
\end{equation}
In particular, integral \eqref{global2} is zero for all choice of data.
\end{proposition}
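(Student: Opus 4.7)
The strategy is to extract a sharp book-keeping relation from the dimension equation by expressing each $\dim\pi_i$ in closed form, and then use the ordering of Levi blocks together with the non-triviality of each $\pi_i$ to force the desired lower bound. For each Eisenstein series $\pi_i=E_{\tau^{(i)}}$, one has $\dim\pi_i=\dim\tau^{(i)}+\dim U(Q_i)$ with $\dim U(Q_i)=\sum_{a<b}m_a^{(i)}m_b^{(i)}=\tfrac{1}{2}(n^2-\sum_a(m_a^{(i)})^2)$. Summing over $i$ and inserting the dimension equation \eqref{dim2}, I obtain the master identity
\[
\sum_{i=1}^{l}\sum_{a}\bigl(m_a^{(i)}\bigr)^2 \;=\; (l-1)n^2+n+2D,\qquad D:=\sum_{i=1}^{l}\dim\tau^{(i)}\ge 0.
\]

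Next I would bound the left-hand side from above. Since the Levi parts are ordered decreasingly, $m_a^{(i)}\le m_1^{(i)}$ gives $\sum_a(m_a^{(i)})^2\le m_1^{(i)}\cdot n$ for each $i$. For every $i\le l-1$ this is strict: equality would require all $m_a^{(i)}=m_1^{(i)}$, hence $r_i\cdot m_1^{(i)}=n$. Combined with the hypothesis $m_1^{(i)}>n/2$, this forces $r_i=1$, so that $\pi_i$ would be the trivial character (since $\tau_1^{(i)}$ is trivial), contradicting the standing assumption that none of the representations is one-dimensional. Summing these bounds and inserting the identity produces
\[
n\Bigl(\sum_{i=1}^{l-1}m_1^{(i)}+m_1^{(l)}\Bigr) \;>\; (l-1)n^2+n,
\]
and integrality yields $\sum_{i=1}^{l-1}m_1^{(i)}+m_1^{(l)}\ge (l-1)n+2$.

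The final step is to replace $m_1^{(l)}$ by $m_j^{(l)}$ in the bound. When the trivial constituent of $\pi_l$ sits at the leading position, choosing $j=1$ makes $m_j^{(l)}=m_1^{(l)}$ and the desired inequality \eqref{cond2} is exactly what was proved; the Fourier-unfolding machinery behind Corollary \ref{cor1}, applied to the $j$-th trivial block of $\tau^{(l)}$, then produces the inner integral $\int_{F\backslash\mathbf{A}}\psi(r)\,dr=0$ and hence the vanishing of integral \eqref{global2}. The main obstacle is the case in which $\tau_1^{(l)}$ is non-trivial while the trivial factor occurs at some $j>1$ with $m_j^{(l)}<m_1^{(l)}$: here, by the framework of Section \ref{l2}, $\tau_1^{(l)}$ is forced to be of Speh type and $\pi_l$ becomes the unique representation with $\mathcal{O}(\pi_l)\ge\mu$, so one has to lower-bound $\dim\tau_1^{(l)}$ (using the explicit dimension of the smallest non-trivial Speh orbit on $GL_{m_1^{(l)}}$) and absorb the difference $m_1^{(l)}-m_j^{(l)}$ through the $2D/n$ slack term in the identity above. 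This quantitative comparison is the delicate point of the argument.
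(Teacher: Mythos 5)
Your master identity and the bound $\sum_a(m_a^{(i)})^2\le m_1^{(i)}n$ (with the nice strictness-plus-integrality observation, which the paper does not use) do correctly yield
$\sum_{i=1}^{l-1}m_1^{(i)}+m_1^{(l)}\ge (l-1)n+2$, so your argument is complete when the trivial constituent of $\tau^{(l)}$ is the leading block, $j=1$. But the statement to be proved involves $m_j^{(l)}$, and since $m_j^{(l)}\le m_1^{(l)}$ you cannot substitute one for the other; you acknowledge this yourself and leave the case $j>1$ as ``the delicate point'' with only a sketch of a strategy. That is a genuine gap, and it is not a minor one: the inequality with $m_j^{(l)}$ (not $m_1^{(l)}$) is exactly what the unfolding machinery of Corollary \ref{cor1} needs, because for $\pi_l$ it is the rows of the \emph{trivial} block, of size $m_j^{(l)}$, that get absorbed into $U_n^w$. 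Moreover your proposed fix cannot be carried out termwise: the relevant bound for $\pi_l$ is $\dim\pi_l\ge\sum_{a<b}m_a^{(l)}m_b^{(l)}\ge m_j^{(l)}(n-m_j^{(l)})$, and if you feed this single-representation estimate into your identity you need $2m_j^{(l)}(n-m_j^{(l)})/n+m_j^{(l)}\ge n+1$, which fails whenever $m_j^{(l)}$ is small. The deficit when $m_j^{(l)}$ is small must be recovered from the \emph{other} representations via the common constraint, i.e.\ by a joint optimization, not representation by representation.

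That joint optimization is precisely what the paper does: it writes $\dim\pi_i= m_1^{(i)}(n-m_1^{(i)})+b_i$ for $i\le l-1$ and $\dim\pi_l=m_j^{(l)}(n-m_j^{(l)})+b_l$ with $b_i\ge 0$, turns the dimension equation into the single constraint $\sum m(n-m)\le\tfrac12 n(n-1)$, and minimizes $\sum_{i=1}^{l-1}m_1^{(i)}+m_j^{(l)}$ subject to it by Lagrange multipliers (all variables equal at the critical point, each $>n/2$), after which the quadratic formula gives the bound $(l-1)n+2$. So the two approaches diverge exactly at the point you flag: your exact identity is cleaner for the $j=1$ case, but only the constrained minimization over all blocks simultaneously handles a small trivial block sitting at position $j>1$. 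To repair your write-up you would either have to carry out that minimization (essentially reproducing the paper's argument) or make precise the quantitative comparison you allude to, lower-bounding $\dim\tau_1^{(l)}$ and the other $b_i$ against the deficit $n(m_1^{(l)}-m_j^{(l)})$; as written, neither is done.
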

\begin{proof}
For all $1\le i\le l-1$ we have $\text{dim}\ {\mathcal O}(E_{\tau^{(i)}}(g,\bar{s}_i))=m_1^{(i)}(n-m_1^{(i)})+b_i$. we also have 
$\text{dim}\ {\mathcal O}(E_{\tau^{(l)}}(g,\bar{s}_l))=m_j^{(l)}(n-m_j^{(l)})+b_l$. Here, the $b_i$'s are some non-negative integer numbers. Thus, we can write the dimension equation \eqref{dim2} as
\begin{equation}\label{cond3}
m_j^{(l)}(n-m_j^{(l)})+\sum_{i=1}^{l-1}m_1^{(i)}(n-m_1^{(i)})+A=
\frac{1}{2}n(n-1)\notag
\end{equation}
where $A$ is a non-negative integer. With these notations, to prove the Proposition, it is enough to prove that the value at the minimum 
point of the function  $\sum_{i=1}^{l-1}m_1^{(i)}+m_j^{(l)}$, subject to the condition $m_j^{(l)}(n-m_j^{(l)})+\sum_{i=1}^{l-1}m_1^{(i)}(n-m_1^{(i)})\le \frac{1}{2}n(n-1)$, is greater or equal to 
$(l-1)n+2$. Using Lagrange multipliers, it is easy to check that the minimum is obtained when all $m_1^{(i)}$ and $m_j^{(l)}$ are equal. Denote this value by $m$. Notice, that since $m_1^{(i)}>n/2$, then
we have $m>n/2$. Thus we need to prove that $lm\ge (l-1)n+2$ if 
$lm(n-m)\le \frac{1}{2}n(n-1)$ and $m>n/2$. Solving the quadratic inequality, we obtain using that $m>n/2$ that 
$$lm\ge \frac{ln}{2} +\frac{1}{2}\left [ (l^2-2l)n^2+2ln\right ]^{1/2}$$
It is easy to check that the right hand side is greater or equal to 
$(l-1)n+2$.

\end{proof}

Next we consider the case when $\pi_l$ is a representation of Speh type. Thus we assume that $n=pq$ and that ${\mathcal O}(\pi_l)=(p^q)$.
Since $(p^q)\ge \mu$, then arguing as after Corollary \ref{cor1}, we may assume that for all $1\le i\le l-1$ the representations $\pi_i$ are the Eisenstein series $E_{\tau^{(i)}}(g,\bar{s}_i)$ such that $\tau_1^{(1)}$ is the trivial representation of $GL_{m_1^{(i)}}({\bf A})$, and $m_1^{(i)}>n/2$. From equation \eqref{dimpart} we have  $\text{dim}\ \pi_l=\frac{1}{2}n(n-q)$. Hence the dimension equation in this case is
\begin{equation}\label{cond4}
\frac{1}{2}n(q-1)+\sum_{i=1}^{l-1}m_1^{(i)}(n-m_1^{(i)})+A=
\frac{1}{2}n(n-1)
\end{equation}
where $A$ is a non-negative integer. In a simialr way as in the previous case, this time we want to minimize $\sum_{i=1}^{l-1}m_1^{(i)}-(l-2)n-1$ subject to the conditions 
$\sum_{i=1}^{l-1}m_1^{(i)}(n-m_1^{(i)})\le \frac{1}{2}n(q-1)$ and $m_1^{(i)}>n/2$. The first term is the expression which appears in equation \eqref{cond1} with $l-1$ instead of $l$. The above inequality is derived from the equation \eqref{cond4}. As in Proposition \ref{prop4}, the minimum of the function $\sum_{i=1}^{l-1}m_1^{(i)}-(l-2)n-1$ is derived when all $m_1^{(i)}$ are equal, and so we need 
to minimize the function $(l-1)m-(l-2)n-1$ with the condition 
$(l-1)m(n-m)\le \frac{1}{2}n(q-1)$. The solution in $m$ of the 
quadratic inequality which satisfies $m>n/2$ is
$$m=\frac{n}{2}+\frac{1}{2(l-1)}\left [(l-1)^2n^2-2n(l-1)(q-1)
\right ]^{1/2}$$
Plugging this value in $(l-1)m-(l-2)n-1$ it is easy to prove that it is greater than $n-q+1$. 

From all this we deduce that after unfolding the Eisenstein series, as we did in the case of integral \eqref{gen1}, we obtain as inner integration to integral \eqref{gen1}, the integrals
\begin{equation}\label{cond5} 
\int\limits_{U_n(F)\backslash U_n({\bf A})}
\varphi_{\pi_l}(u)\psi_{U,\epsilon'}(u)du
\end{equation}
where the vector $\epsilon'$ has at least $n-q+1$ nonzero entries. Hence, it follows from Proposition 5.3 in \cite{G3}, and from the assumption that ${\mathcal O}(\pi_l)=(p^q)$, that integral \eqref{cond5} is zero for $\psi_{U,\epsilon'}$ as above. Indeed, the
Fourier coefficient corresponding to the unipotent orbit $(p^q)$ is
given by integral \eqref{cond5} where $\epsilon'$ has exactly $n-q$
nonzero entries. It is not hard to check that if $\epsilon'$ has at
least $n-q+1$ nonzero entries, we obtain a Fourier coefficient corresponding to a unipotent orbit which is greater than or not related to $(p^q)$. We summarize,
\begin{proposition}\label{prop5}
For $1\le i\le l-1$, let $\pi_i$ denote the $l-1$ Eisenstein series defined above. 
Assume that $\pi_l$ is a representation of Speh type. 
Assume also that the dimension equation \eqref{dim2} holds for these $l$ representations. Then integral \eqref{global2} is zero for all choice of data.
\end{proposition}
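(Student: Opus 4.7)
The plan is to combine the unfolding-and-Fourier-expansion machinery from the discussion of integrals \eqref{gen1}--\eqref{gen3} with the structural vanishing result for Fourier coefficients of Speh type representations (Proposition 5.3 of \cite{G3}). First I would write the dimension equation \eqref{dim2} in the explicit form
\begin{equation*}
\tfrac{1}{2}n(q-1)+\sum_{i=1}^{l-1}m_1^{(i)}(n-m_1^{(i)})+A=\tfrac{1}{2}n(n-1),
\end{equation*}
where $A\ge 0$ absorbs the contributions of the other blocks $\tau_j^{(i)}$ ($j\ge 2$) in each Eisenstein series. This is the analogue of \eqref{cond4} and translates the hypothesis into a quantitative bound on $\sum m_1^{(i)}$.

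Next I would run the iterative unfolding exactly as in the derivation leading to Corollary \ref{cor1}: unfold $E_{\tau^{(1)}}(g,\bar s_1)$, apply the Fourier expansion of Proposition \ref{prop1} to push the Whittaker character against the remaining factors, then repeat with $E_{\tau^{(2)}},\dots,E_{\tau^{(l-1)}}$. At each stage the count of \emph{nontrivial} simple-root entries in the resulting character $\psi_{U,\epsilon'}$ goes up by $m_1^{(i)}-1-(n-m_1^{(i+1)})$, so after all $l-1$ unfoldings, \eqref{gen1} is reduced to a sum of integrals \eqref{cond5} whose character $\epsilon'$ has at least $\sum_{i=1}^{l-1}m_1^{(i)}-(l-2)n-1$ nonzero coordinates.

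The key quantitative step is to show that under the dimension equation and the assumption $m_1^{(i)}>n/2$ (forced by the preceding discussion via Proposition \ref{prop3}), we always have
\begin{equation*}
\sum_{i=1}^{l-1}m_1^{(i)}-(l-2)n-1\;\ge\;n-q+1.
\end{equation*}
Following the Lagrange multiplier calculation sketched after \eqref{cond4}, the minimum of $\sum m_1^{(i)}$ under the constraint $\sum m_1^{(i)}(n-m_1^{(i)})\le \tfrac{1}{2}n(q-1)$ and $m_1^{(i)}>n/2$ is achieved at the symmetric point $m_1^{(i)}=m$, giving
\begin{equation*}
m=\frac{n}{2}+\frac{1}{2(l-1)}\bigl[(l-1)^2n^2-2n(l-1)(q-1)\bigr]^{1/2},
\end{equation*}
and then one verifies algebraically that $(l-1)m-(l-2)n-1\ge n-q+1$ for all admissible $n,q,l$. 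This reduces to a clean quadratic inequality in $n$ and $q$ that I would check directly.

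Finally, with the character $\psi_{U,\epsilon'}$ having at least $n-q+1$ nontrivial simple-root coordinates, I would invoke Proposition 5.3 of \cite{G3}: the Fourier coefficient $\int_{U_n(F)\backslash U_n(\mathbf A)}\varphi_{\pi_l}(u)\psi_{U,\epsilon'}(u)\,du$ corresponds to a unipotent orbit that is either strictly greater than $(p^q)$ or not comparable to it, and in either case it vanishes identically on $\pi_l$ by the definition of ${\mathcal O}(\pi_l)=(p^q)$. Since every summand produced by the Fourier-expansion process vanishes, integral \eqref{global2} vanishes for all choice of data. The main obstacle I anticipate is bookkeeping: making sure that the iterated Fourier expansion really does increase the count of nontrivial simple roots by exactly the claimed amount at each stage, and in particular that the extra integer $A\ge 0$ in the dimension equation can only help, not hurt, the inequality on $\sum m_1^{(i)}$.
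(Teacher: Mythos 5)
Your proposal follows essentially the same route as the paper's own argument: the same explicit form \eqref{cond4} of the dimension equation, the same iterated unfolding reducing to the coefficients \eqref{cond5} with at least $\sum_{i=1}^{l-1}m_1^{(i)}-(l-2)n-1$ nonzero entries of $\epsilon'$, the same Lagrange-multiplier minimization at the symmetric point $m_1^{(i)}=m$ showing this count is at least $n-q+1$, and the same appeal to Proposition 5.3 of \cite{G3} to kill the resulting Fourier coefficients of $\pi_l$. There is nothing substantively different to compare.
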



\begin{thebibliography}{AAAAA}


\bibitem[C-M]{C-M} Collingwood, David H.; McGovern, William
Nilpotent orbits in semisimple Lie algebras.
Van Nostrand Reinhold Mathematics Series. Van Nostrand Reinhold Co., New York, 1993. xiv+186 pp.


\bibitem[G1]{G1} Ginzburg, David Towards a classification of global integral constructions and functorial liftings using the small representations method. Adv. Math. 254 (2014), 157--186.


\bibitem[G2]{G2} Ginzburg, David Classification of some global integrals related to groups of type $A_n$. J. Number Theory 165 (2016), 169–202.




\bibitem[G3]{G3} Ginzburg, David Certain conjectures relating unipotent orbits to automorphic representations. Israel J. Math. 151 (2006), 323--355.

\bibitem[G4]{G4} Ginzburg, David Eulerian integrals for $GL_n$. Multiple Dirichlet series, automorphic forms, and analytic number theory, 203–223, Proc. Sympos. Pure Math., 75, Amer. Math. Soc., Providence, RI, 2006.


\bibitem[J]{J} Jacquet, Herve On the residual spectrum of $GL(n)$. Lie group representations, II (College Park, Md., 1982/1983), 185--208, Lecture Notes in Math., 1041, Springer, Berlin, 1984.





\bibitem[PS]{PS} Piatetski-Shapiro, Ilya Euler subgroups. Lie groups and their representations (Proc. Summer School, Bolyai János Math. Soc., Budapest, 1971), pp. 597--620. Halsted, New York, 1975. 



\bibitem[S]{S} F. Shahidi, Eisenstein series and automorphic L -functions.
American Mathematical Society Colloquium Publications, 58. American
Mathematical Society, Providence, RI, 2010. vi+210 pp.








\end{thebibliography}
\end{document}